\newtheorem{theo}{{\sc Theorem}}[section]
\newtheorem{maintheo}{{\sc Theorem}}
\newtheorem{mainprop}{{\sc Proposition}}
\newtheorem{lem}[theo]{{\sc Lemma}}
\newcommand{\E}{\mathbb{E}}
\newcommand{\hcal}{\mathcal{H}}
\newcommand{\N}{\mathbb{N}}
\newcommand{\ncal}{\mathcal{N}}
\newcommand{\R}{\mathbb{R}}
\newcommand{\Z}{\mathbb{Z}}
\newcommand{\T}{I_N}
\newcommand{\ad}{\operatorname{ad}}
\newcommand{\Herm}{\operatorname{Herm}}
\newcommand{\U}{\operatorname{U}}
\newcommand{\Psib}{\bm{\Psi}}
\renewcommand{\P}{\mathbb{P}}
\newcommand{\B}[2]{\langle B Y_{N}^{#1}, Y_N^{#2} \rangle}
\newcommand{\A}[2]{\langle A Y_{N}^{#1}, Y_N^{#2} \rangle}
\renewcommand{\u}[2]{u_{N,{#1}}(#2)}
\newcommand{\ubar}[2]{\overline{u_{N,{#1}}(#2)}}
\let\temp\phi
\let\phi\varphi
\let\varphi\temp
\renewcommand{\epsilon}{\varepsilon}
\def\Xint#1{\mathchoice
{\XXint\displaystyle\textstyle{#1}}%
{\XXint\textstyle\scriptstyle{#1}}%
{\XXint\scriptstyle\scriptscriptstyle{#1}}%
{\XXint\scriptscriptstyle\scriptscriptstyle{#1}}%
\!\int}
\def\XXint#1#2#3{{\setbox0=\hbox{$#1{#2#3}{\int}$ }
\vcenter{\hbox{$#2#3$ }}\kern-.6\wd0}}
\def\dashint{\Xint-}
\renewcommand{\Re}{\operatorname{Re}}
\renewcommand{\Im}{\operatorname{Im}}
\renewcommand{\u}[2]{u_{N,{#1}}(#2)}
\newcommand{\X}{X_N}
\newcommand{\W}{W_{n,N}}
\def\cprime{$'$}
\title[Quantum ergodicity of Wigner induced random spherical harmonics]
{Quantum ergodicity of Wigner induced random spherical harmonics}
\author{Robert Chang}
\address{Department of Mathematics, Northwestern  University, Evanston, IL 60208, USA}
\email{hchang@math.northwestern.edu}
\date{\today}
\begin{document}

\begin{abstract}
We introduce a new notion of a `random orthonormal basis of spherical harmonics' of $L^2(S^2)$ using generalized Wigner ensembles and show that such a random basis is almost surely quantum ergodic. Similar quantum ergodicity results (with varying degrees of generality) are obtained in \cite{Z1,Z2,Z3,M,BL} for random Laplacian eigenfunctions defined using Haar measures on unitary groups. Our main contribution comes from the use of a more general measure than previously studied, as the Gaussian unitary ensemble (which induces Haar measure on the unitary group) is a special case of the generalized Wigner ensemble. We are able to work with this more general class of measures because Wigner eigenvectors are asymptotically Gaussian, a result proved in \cite{KY, TV} (with additional assumptions on the moments) and \cite{BY}. Our quantum ergodicity statement also provides a semi-classical realization of the probabilistic `local quantum unique ergodicity' of \cite{BY}.

\end{abstract}

\maketitle

\section{Introduction}\label{sec:intro}

Let $(M,g)$ be a compact Riemannian manifold. Let $\Delta = \Delta_g$ be the Laplace-Beltrami operator and consider the eigenvalue problem $(\Delta - \lambda_k)\phi_k = 0$ with $0 < \lambda_1 \le \lambda_2 \le \dotsb \uparrow \infty$. The eigenfunctions $\phi_k$ are said to be quantum ergodic if for every pseudo-differential operator $A \in \Psi^0(M)$ of degree zero, we have

\begin{equation}\label{eqn:qe}
\lim_{\lambda \rightarrow \infty} \frac{1}{\#\{\lambda_k \le \lambda\}}\sum_{\lambda_k \le \lambda} \left\lvert \langle A \phi_k, \phi_k \rangle - \omega(A) \right\rvert^2 = 0,
\end{equation}
where

\begin{equation}\label{eqn:omega}
\omega(A) := \int_{S^*M} \sigma_A \,d\mu_L
\end{equation}
is the integral of the principal symbol $\sigma_A$ of $A$ with respect to the normalized Liouville measure $\mu_L$ on the cosphere bundle $S^*M$. A fundamental result that explains how the mixing properties of a classical system is reflected in the microlocal properties of eigenfunctions is the quantum ergodicity theorem of Shnirelman \cite{S}, Zelditch \cite{Z87}, and Colin de Verdi\`ere \cite{CdV}. The theorem states that if the geodesic flow is ergodic, then the Laplacian eigenfunctions $\phi_k$ enjoy the quantum ergodic property \eqref{eqn:qe}. In particular, modulo a density zero subsequence, the eigenfunctions become delocalized in phase space in the sense that

\begin{equation}\label{eqn:qe 2}
\langle A \phi_{k_j}, \phi_{k_j} \rangle \rightarrow \omega(A) \quad \text{for all $A \in \Psi^0(M)$.}
\end{equation}
The asymptotic behavior \eqref{eqn:qe 2} need not hold when the geodesic flow is no longer assumed to be ergodic. On the sphere, for instance, the geodesic flow is completely integrable and direct computations show that the standard spherical harmonics localize not only on phase space, but also on the base manifold $S^2$.

This fact notwithstanding, it is shown in \cite{Z1} that a random orthonormal basis (defined using Haar measures on unitary groups) of spherical harmonics is almost surely quantum ergodic, a result that is extended to Laplacian eigenfunctions on compact Riemannian manifolds in \cite{Z2,Z3,M,BL}. The purpose of this paper is to return to the sphere and prove quantum ergodicity for a wider class of `random' spherical harmonics. Consider the orthogonal decomposition of $L^2(S^2)$ into a direct sum of subspaces $\hcal_N = \operatorname{span}\{Y_N^k \mathrel{} \mid \mathrel{} -N \le k \le N\}$ spanned by the standard degree $N$ spherical harmonics. Here, by `standard,' we mean spherical harmonics $Y_N^k$ that are the joint eigenfunctions of the Laplacian $\Delta = \Delta_{S^2}$ and the $z$-component of the angular momentum operator $L_z = \frac{1}{i}\frac{d}{d\varphi}$, that is,

\begin{equation*}
\begin{dcases}
\Delta Y_N^k = -N(N+1)Y_N^k, \\
\frac{1}{i}\frac{\partial}{\partial \varphi} Y_N^k = k Y_N^k.
\end{dcases}
\end{equation*}
We write $d_N = \dim \hcal_N = 2N+1$ for the dimension of $\hcal_N$.
 
Let $H_N \in \Herm(d_N)$ be a generalized Wigner matrix. (See Section~\ref{sec:random matrix} for background on random matrix theory.) For $-N \le k \le N$, let $u_{N,k} = (u_{N,k}(\alpha))_{\alpha = -N}^N$ be the eigenvectors of $H_N$. Our object of study is the Wigner induced random basis $\{\psi_{N,k}\}_{k = -N}^N$ for $\hcal_N$ obtained by `transplanting the Wigner eigenvectors onto the sphere' in the obvious way:

\begin{equation}\label{eqn:random onb}
\psi_{N,k} := \sum_{\alpha = -N}^N u_{N,k}(\alpha) Y_N^\alpha, \quad -N \le k \le N.
\end{equation}
An equivalent way of thinking about the random basis $\{\psi_{N,k}\}$ is to identify it with a unitary change-of-basis matrix $U_N = (u_{N,k}(\alpha))_{-N \le k,\alpha \le N}$ viewed as an element of the probability space $(\U(d_N),\mu_N)$. The probability measure $\mu_N$ on the unitary group $\U(d_N)$ is induced by a generalized Wigner matrix in the following way. Let $\pi$ be the map from Hermitian matrices to unitary matrices modulo the maximal torus $\U(1)^{d_N}$ defined by

\begin{equation*}
\pi \colon \Herm(d_N) \rightarrow \U(d_N)/\U(1)^{d_N}, \quad H_N = U_N^*D(\bm{\lambda})U_N \mapsto [U_N],
\end{equation*}
where $U_N$ is a unitary matrix that diagonalizes $H_N$ and $D(\bm{\lambda})$ is the resulting diagonal matrix. If we write $\mu_N^\mathrm{W}$ for the measure on the Hermitian matrices that describes the generalized Wigner ensemble, then the induced measure $\mu_N$ on the unitary group is simply the pushforward of $\mu_N^{\mathrm{W}}$ under the above map $\pi$, that is,

\begin{equation} \label{eqn:induced measure}
\mu_N := \pi_* \mu_N^\mathrm{W}.
\end{equation}

The construction of a Wigner induced random basis \eqref{eqn:random onb} for the finite dimensional subspace $\hcal_N$ extends naturally to all of $L^2(S^2)$. Indeed, let $U$ be the operator that acts block-diagonally on the decomposition $L^2(S^2) = \bigoplus_{N \ge 0} \hcal_N$ so that the restrictions $U|_{\hcal_N} = U_N \in \U(d_N)$ to the subspaces yield a sequence of independent unitary matrices of the appropriate dimensions. By the preceding paragraph, a Wigner induced random orthonormal basis $\Psib = \{\psi_{N,k}\}_{-N \le k \le N, N \ge 0}$ for all of $L^2(S^2)$ may be identified with such an operator $U$ viewed as an element of the product probability space $\prod_{n \ge 0}(\U(d_N),\mu_N)$. Henceforth, when the context is clear, we will refer to $\Psib$ simply as a `random basis' with the understanding that it is constructed randomly with respect to the product measure $\prod \mu_N$.

For technical reasons, certain indices $k$ need to be excluded from our computations. Let $0 < \nu < \frac{3}{4}$ be a positive constant (guaranteed by Theorem~\ref{theo:asymptotic normality}), and let

\begin{equation}\label{eqn:indices}
\T  = [[-N, -N + N^{1/4}]] \cup [[-N + N^{1 - \nu}, N - N^{1 - \nu}]] \cup [[N - N^{1/4}, N]]
\end{equation}
be the subset of indices $-N \le k \le N$ that are, in the random matrix theory language, `in the bulk' and `near the edges.' We can only work with indices belonging to $\T$ because the asymptotic normality result of Bourgade-Yau (Theorem~\ref{theo:asymptotic normality}), which we rely on, is established only for $k \in \T$. (The set $\T$ displayed above is precisely the set $\mathbb{T}_N$ in the statement of Theorem~1.2 in the original paper \cite{BY}, except that the our indexing convention is $k \in [-N,N]$, and the convention of \cite{BY} is $k \in [1,N]$.) It is expected that Theorem~\ref{theo:asymptotic normality} holds for all indices $k$ (see the remark immediately following Definition~5.1 in \cite{BY}). Luckily, the set $I_N$ is sufficient for deriving a quantum ergodicity statement because we are still left with a density one subsequence after discarding indices in the intermediate regime, that is,

\begin{equation*}
\frac{\lvert \{k \in \T \}\rvert}{\lvert \{k \in [-N,N]\} \rvert} \rightarrow 1.
\end{equation*}

Given a pseudo-differential operator $A \in \Psi^0(M)$ of order zero and a random basis $\Psib$, let $\X = X_N^A(\{\psi_{N,k}\}) \colon (\U(d_N), \mu_N) \rightarrow \R_{\ge 0}$ be random variables given by

\begin{equation}\label{eqn:rv}
\X  = X_N^A(\{\psi_{N,k}\}) = \frac{1}{d_N} \sum_{k \in \T} \lvert\langle A\psi_{N,k}, \psi_{N,k}\rangle - \omega(A) \rvert^2,
\end{equation}
where $\omega(A)$ is defined in \eqref{eqn:omega}. Even though the random variable \eqref{eqn:rv} depends on the choice of a pseudo-differential operator and a random basis, for notational simplicity we will continue to write $\X := X_N^A(\{\psi_{N,k}\})$. Our quantum ergodicity result is formulated in terms of $\X$.

\begin{maintheo}\label{theo:main thm}
Let $\Psib$ be a Wigner induced random orthonormal basis of spherical harmonics for $L^2(S^2)$. Then $\Psib$ is almost surely quantum ergodic with respect to the product probability measure $\prod \mu_N$ in the sense that

\begin{equation*}
\lim_{M \rightarrow \infty} \frac{1}{M}\sum_{N = 0}^M \X = 0 \quad{\text{a.s.}}
\end{equation*}
for every $A \in \Psi^0(S^2)$.
\end{maintheo}

Note that the random variables $\X$ are independent by construction. Theorem~\ref{theo:main thm} is therefore an easy consequence of the Kolmogorov convergence criterion and Strong Law of Large Numbers once we show that $\E \X \rightarrow 0$ and $\E \X^2$ is bounded. Indeed, the following holds.

\begin{maintheo}\label{theo:main thm 2}
We have $\E \X = O(d_N^{-\epsilon_0})$ and $\E \X^2 = O(d_N^{-\epsilon'_0})$ for some $\epsilon_0, \epsilon'_0 > 0$ guaranteed by Theorem~\ref{theo:asymptotic normality}.
\end{maintheo}

This is a good place for some remarks. First, since we only work with random spherical harmonics in this paper, we confine ourselves to describing the construction of random bases on $S^2$. A similar construction that involves partitioning the spectrum of the Laplacian appropriately can be used to make sense of random bases (defined using either Haar measures or Wigner induced measures on unitary groups) on any compact Riemannian manifold. Readers are referred to \cite{Z2,Z3,M,BL} for the general construction. A natural next step is to extend our quantum ergodicity result to Wigner induced random bases of Laplacian eigenfunctions or approximate eigenfunctions on other manifolds.

Second, it is known that the eigenvectors of a Gaussian unitary ensemble is distributed by Haar measure on the unitary group. Since the generalized Wigner ensembles contain GUE as a special case, the measure with respect to which Wigner eigenvectors are distributed (i.e., the Wigner induced measure $\mu_N$) is a vast generalization of Haar measure.  It is unknown to the author if such measures can be given an explicit characterization. Nevertheless, universality results from random matrix theory are robust enough for showing that Wigner induced random bases enjoy the same quantum ergodicity property as `GUE induced random bases' (i.e., random bases defined using Haar measure) on the sphere.

Finally, the methods presented in this paper can be used to prove quantum ergodicity of Wigner induced random spherical harmonics on higher dimensional spheres $S^p$ for any $p \ge 2$. It will be clear from the proof that $\epsilon_0$ and $\epsilon'_0$ in the statement of Theorem~\ref{theo:main thm 2} are independent of the dimension $p$ because, in the notation of Theorem~\ref{theo:asymptotic normality}, we have $\epsilon_0 = \epsilon_0(Q_1)$ and $\epsilon'_0 = \epsilon'_0(Q_2)$ where $Q_1,Q_2$ are polynomials of the form

\begin{equation*}
Q_1(z_1, z_2, z_3, z_4) = z_1 z_2 \overline{z}_3 \overline{z}_4 \quad \text{and} \quad Q_2(z_1, \dotsc, z_8) = z_1 z_2z_3 z_4 \overline{z}_5 \overline{z}_6\overline{z}_7 \overline{z}_8.
\end{equation*}
While $\epsilon_0, \epsilon'_0$ remain fixed for all $p \ge 2$, the dimension $d_N$ of the space of degree $N$ spherical harmonics grows like $N^{p-1}$ on $S^p$. Substituting the asymptotics for $d_N$ into the statement of Theorem~\ref{theo:main thm 2} gives $\E \X = O(N^{-\epsilon_0(p - 1)})$ and $\E \X^2 = O(N^{-\epsilon'_0(p - 1)})$. Observe that, for all $p$ sufficiently large, the Borel-Cantelli lemma becomes applicable and implies the stronger convergence statement that $\X \rightarrow 0$ almost surely instead of the Ces\`aro means $\frac{1}{M}\sum_{N=0}^M X_N \rightarrow 0$.

The rest of the paper is organized as follows. Section~\ref{sec:random matrix} provides a brief summary of random matrix theory that will be used in our proofs. The key result is Theorem~\ref{theo:asymptotic normality}, which states that Wigner eigenvectors (with the appropriate scaling) are asymptotically Gaussian random variables. Section~\ref{sec:rot inv} is devoted to proving Proposition~\ref{prop:rot inv}, which is a special case of Theorem~\ref{theo:main thm 2}. The techniques developed for this special case extends easily to prove the main theorems in Section~\ref{sec:general case}.

The author expresses his sincere gratitude towards Steve Zelditch and Antonio Auffinger for their patience and generous assistance that greatly improved the manuscript.

\subsection{Asymptotic normality of Wigner eigenvectors and Bourgade-Yau local QUE}\label{sec:random matrix}

We now summarize a universality result for Wigner eigenvectors proved in \cite{BY}. In keeping with the indexing convention for spherical harmonics, the indices in this section continue to range from $-N$ to $N$. Recall also that $d_N = 2N+1$.

By a generalized Wigner matrix we mean a Hermitian matrix $H_N = (h_{jk})_{-N \le j,k \le N} \in \Herm(d_N)$ such that:

\begin{itemize}
\item The entries $h_{jk}$ are independent random variables for $j \le k$, each with mean zero and variance $\E h_{jk}^2 =: \sigma_{jk}^2$ satisfying the normalization condition $\sum_{j=-N}^N \sigma_{jk}^2 = 1$ for $k$ fixed;

\item There exists a constant $c_1 > 0$ independent of $N$ such that $(c_1N)^{-1} \le \sigma_{jk}^2 \le c_1N$ for all $-N \le j,k \le N$;

\item There exists a constant $c_2 > 0$ independent of $N$ such that $\E(\bm{h}_{jk}^*\bm{h}_{jk}) \ge c_2 N^{-1}$ in the sense of inequality between $2 \times 2$ positive matrices, where $\bm{h}_{jk} := (\Re h_{jk}, \Im h_{jk})$;

\item For any $q \in \N$, there exists a constant $C_q > 0$ such that for any $N$ and any $-N \le j,k \le N$, we have $\E\lvert \sqrt{d_N} h_{jk} \rvert^q \le C_q$.
\end{itemize}

Let $u_{N,k} = (u_{N,k}(\alpha))_{\alpha = -N}^N$ denote the eigenvectors of a generalized Wigner matrix $H_N \in \Herm(d_N)$. The eigenvectors, indexed by $k \in [-N,N]$, are ordered so that the corresponding eigenvalues form a nondecreasing sequence. Of course, an eigenvector is well-defined only up to a phase $e^{i\theta} \in \U(1)$. This phase ambiguity may be eliminated, for instance, by considering instead the equivalence class $[u_{N,k}]$.

\begin{theo}[Asymptotic normality for generalized Wigner eigenvectors, \cite{BY} Corollary~1.3]\label{theo:asymptotic normality}
Let $\{H_N\}$ be a sequence of generalized Wigner matrices. Let $\T$ be the set of indices away from the intermediate regime as defined in \eqref{eqn:indices} \textup{(}note that $\T$ depends on a parameter $\nu$\textup{)}. Then there exists $\nu > 0$ such that for any $k \in \T$  and $J \subset \{-N, \dotsc, N\}$ with $\lvert J \rvert = m$, we have

\begin{equation*}
\sqrt{d_N}(u_{N,k}(\alpha))_{\alpha \in J} \rightarrow \big(\ncal_j^{(1)} + i\ncal_j^{(2)}\big)_{j=1}^m
\end{equation*}
in the sense of convergence in moments modulo phases, where $\ncal_j^{(1)}, \ncal_j^{(2)}$ are independent standard Gaussians. More precisely, for any polynomial $Q$ in $2m$ variables, there exists $\epsilon = \epsilon(Q) > 0$ such that for sufficiently large $N$ we have

\begin{align*}\notag
\sup_{\substack{J \subset \{-N, \dotsc, N\}\\ \lvert J \rvert = m,\; k \in \T}} &\left\lvert \E Q\left(\sqrt{2N}\big(e^{i\omega}u_{N,k}(\alpha), e^{-i\omega}\overline{u_{N,k}(\alpha)}\big)_{\alpha \in J}\right)\right.\\
&\quad\quad \quad\quad - \left.\E Q\left(\big(\ncal_j^{(1)} +i \ncal_j^{(2)}, \ncal_j^{(1)} - i\ncal_j^{(2)}\big)_{j=1}^m \right)\right\rvert \le d_N^{-\epsilon}.
\end{align*}
Here $\omega$ a phase independent of $H_N$ and uniform on $(0,2\pi)$.
\end{theo}

In fact, a stronger statement is proved Theorem~1.2 of \cite{BY}, namely the projection $\langle \bm{q}, u_{N,k}\rangle$ of an eigenvector to any unit vector $\bm{q} \in \R^{d_N}$ is asymptotically normal. As a corollary, generalized Wigner eigenvectors are `locally quantum unique ergodic' in the following sense. Let $a_N \colon \{-N, \dotsc, N\} \rightarrow [-1,1]$ be a function with $\sum_{\alpha=-N}^N a_N(\alpha) = 0$ and let $\lvert a_N \rvert = \#\{-N \le \alpha \le N \mathrel{} \mid \mathrel{} a_N(\alpha) \neq 0\}$ be the size of its support.

\begin{theo}[Local QUE for generalized Wigner eigenvectors, \cite{BY} Corollary~1.4]\label{theo:local QUE}
Let $\{H_N\}$ be a sequence of generalized Wigner matrices. Then there exists $\epsilon > 0$ such that for any $\delta > 0$, there exists a constant $C > 0$ so that for every sequence of functions $\{a_N\}$ as above and $k \in I_N$ we have

\begin{equation}\label{eqn:local QUE}
\P\left(\left\lvert\frac{d_N}{\lvert a_N\rvert} \langle a_Nu_{N,k},u_{N,k} \rangle\right\rvert > \delta\right) \le C(d_N^{-\epsilon} + \lvert a_N \rvert^{-1}),
\end{equation}
where $\langle a_Nu_{N,k},u_{N,k}\rangle := \sum_{\alpha = -N}^N a_N(\alpha)\lvert u_{N,k}(\alpha)\rvert^2$.
\end{theo}
Theorem~\ref{theo:asymptotic normality} shows that Wigner eigenvectors are asymptotically flat even on small scales by choosing the test functions $a_N$ to have small supports. Note that since the left-hand side of \eqref{eqn:local QUE} depends only the eigenvectors but not the eigenvalues, the measure used in Theorem~\ref{theo:local QUE} is precisely the induced measure $\mu_N$ defined in \eqref{eqn:induced measure}.

We take this opportunity to remark that on a compact manifold $(M,g)$, the analogue to the limiting formula \eqref{eqn:local QUE} given by

\begin{equation}\label{eqn:flat}
\int_M f(x) \lvert \phi_k(x)\rvert^2\,dx \rightarrow \int_M f(x)\,dx \quad \text{for every $f \in C^\infty(M)$}
\end{equation}
is insufficient for concluding that $\{\phi_k\}$ is quantum ergodic in the sense of \eqref{eqn:qe} or \eqref{eqn:qe 2}. This is because delocalization on the base manifold $M$ is a much weaker condition than diffuseness in the phase space $S^*M$. For instance, the Laplacian eigenfunctions $e^{i \langle \lambda, x \rangle}$ on a flat torus $\R^n / 2 \pi \Z^n$ are delocalized in the sense of \eqref{eqn:flat}. But if $\{\lambda_k\}$ is a sequence of lattice points for which the unit vectors $\lambda_k / \lvert \lambda_k \rvert$ tend to a limit vector $\xi \in \R^n$, then the asymptotic formula

\begin{equation*}
\langle A e^{i\langle \lambda_k, x \rangle}, e^{i\langle \lambda_k, x \rangle}\rangle \simeq \int_{\R^n/2 \pi \Z^n} \sigma_A\left(x, \frac{\lambda_k}{\lvert \lambda_k \rvert}\right)\,dx \quad \text{for every $A \in \Psi^0(\R^n / 2 \pi \Z^n)$}
\end{equation*}
shows that the corresponding weak* limit is a delta mass on the invariant Lagrangian torus $T_{\xi} \subset S^*M$ for the geodesic flow. Since there always exists a sequence of $\lambda_k / \lvert \lambda_k \rvert$ converging to arbitrary $\xi \in \R^n$, the eigenfunctions $e^{i \langle \lambda, x \rangle}$ are far from diffuse in phase space. Of course, in the random matrix setting it is unclear even how to interpret the phase space when the base manifold is an index set $\{-N, \dotsc, N\}$. We will need additional tools from semi-classical analysis to show that Theorem~\ref{theo:main thm} holds.

\section{Rotationally invariant case}\label{sec:rot inv}

The purpose of this section is to prove Proposition~\ref{prop:rot inv} stated below. The difference between the proposition and Theorem~\ref{theo:main thm 2} is the rotational invariance assumption we impose on $A$ (and hence on the random variable $\X$). This additional assumption allows us to isolate the key computational techniques and exhibit them in a simpler setting.

To clearly distinguish the special case we are currently considering from the general case, let us introduce some new notation. Let $B \in \Psi^0(S^2)$ denote pseudo-differential operators of degree zero that are invariant under $z$-axis rotations. To these rotationally invariant operators we associate random variables

\begin{equation}\label{eqn:inv rv}
Z_N = Z_N^B(\{\psi_{N,k}\}) = \frac{1}{d_N} \sum_{k \in I_N} \lvert \langle B \psi_{N,k}, \psi_{N,k} \rangle - \omega(B) \rvert^2,
\end{equation}
where $I_N$ is defined in \eqref{eqn:indices} and $\omega(B)$ is defined in \eqref{eqn:omega}. Our goal is to show the following.

\begin{mainprop}\label{prop:rot inv}
In the above notation, we have $\E Z_N = O(d_N^{-\epsilon})$ and $\E Z_N^2 = O(d_N^{-\epsilon'})$ for some $\epsilon, \epsilon' > 0$ guaranteed by Theorem~\ref{theo:asymptotic normality}.
\end{mainprop}

\begin{proof}[Proof of Proposition~\ref{prop:rot inv}]
Note that the rotational invariance hypothesis implies that the matrix elements $\B{\alpha}{\beta}$ vanish whenever $\alpha \neq \beta$. Rewriting the random basis elements $\psi_{N,k}$ in terms of spherical harmonics $Y_N^\alpha$ using \eqref{eqn:random onb}, the expression \eqref{eqn:inv rv} becomes

\begin{align*}
Z_N &= \frac{1}{d_N} \sum_{k \in \T} \left\lvert \sum_{\alpha,\beta = -N}^N \B{\alpha}{\beta} \u{k}{\alpha}\u{k}{\beta} - \omega(B) \,\right\rvert^2\\
&= \frac{1}{d_N} \sum_{k \in \T} \left\lvert \sum_{\alpha} \B{\alpha}{\alpha}\lvert \u{k}{\alpha} \rvert^2 - \omega(B) \,\right\rvert^2\\
&= S_1 + S_2,
\end{align*}
where

\begin{align*}
S_1 &= \frac{1}{d_N} \sum_{k \in \T} \sum_{\alpha,\beta} \B{\alpha}{\alpha}\B{\beta}{\beta} \lvert \u{k}{\alpha} \rvert^2 \lvert \u{k}{\beta} \rvert^2,\\
S_2 &= -\frac{2\omega(B)}{d_N} \sum_{k \in \T} \sum_{\alpha} \B{\alpha}{\alpha}\lvert \u{k}{\alpha}\rvert^2 + \frac{1}{d_N} \sum_{k \in \T} \omega(B)^2.
\end{align*}

We use the Weingarten formula \cite{W} to compute the expectation $\E Z_N = \E S_1 + \E S_2$. Let $(\u{k}{\alpha})_{-N \le k, \alpha \le N} \in \U(d_N)$ be a unitary matrix and for $1 \le j \le m$, let $k_j, k_j',\alpha_j, \alpha_j' \in [-N,N]$ be indices. The Weingarten formula states that the integral

\begin{equation*}
I_N(m) := \int_{\U(d_N)} \u{k_1}{\alpha_1} \dotsm \u{k_m}{\alpha_m} \ubar{k_1'}{\alpha_1'} \dotsb \ubar{k_m'}{\alpha_m'} \,dU_N
\end{equation*}
of a polynomial in the entries of $(\u{k}{\alpha})$ with respect to Haar measure $dU_N$ has an asymptotic formula in terms of the Kronecker delta functions on the indices:
\begin{equation}\label{eqn:old Weingarten}
I_N(m) = d_N^{-m}\sum \delta_{k_1 k'_{j_1}} \delta_{\alpha_1 \alpha'_{j_1}}\dotsm \delta_{k_\ell k'_{j_m}}\delta_{\alpha_\ell \alpha'_{j_m}} + O(d_N^{-m - 1}),
\end{equation}
where the sum is over all choices of $j_1, \dotsc, j_m$ as a permutation of $1, \dotsc, m$. Let $Q$ be the polynomial in $2m$ variables defined by $Q\big((z_j,w_j)_{j=1}^m\big) := z_1 \dotsm z_m \overline{w}_1 \dotsm \overline{w}_m$. Then, in the notation of Theorem~\ref{theo:asymptotic normality}, direct computation with Gaussian random variables shows that

\begin{equation}\label{eqn:comparison with Gaussian}
\left\lvert \frac{1}{d_N^m}\E Q\left( \big(\ncal_j^{(1)} + i \ncal_J^{(2)}, \ncal_J^{(1)} - i\ncal_J^{(2)}\big)_{j=1}^m\right) - I_N(m) \right\rvert = O(d_N^{-m-1})
\end{equation}
Putting together \eqref{eqn:old Weingarten}, \eqref{eqn:comparison with Gaussian}, and Theorem~\ref{theo:asymptotic normality} proves the following key lemma.

\begin{lem}\label{eqn:wigner weingarten}
Let $(\u{k}{\alpha}) \in \U(d_N)$ be a unitary matrix. For indices $k_1, \dotsc, k_m, k_1', \dotsc, k_m' \in \T$ and $\alpha_1, \dotsc, \alpha_m, \alpha_1', \dotsc, \alpha_m' \in [-N,N]$, we have

\begin{align}\label{eqn:Weingarten}\notag
& \E \left(\u{k_1}{\alpha_1} \dotsm \u{k_m}{\alpha_m}\overline{\u{k'_1}{\alpha'_1}}\dotsm \overline{\u{k'_m}{\alpha'_m}}\right) \\
&\qquad\qquad\qquad = d_N^{-m} \sum \delta_{k_1 k'_{j_1}} \delta_{\alpha_1 \alpha'_{j_1}}\dotsm \delta_{k_m k'_{j_m}}\delta_{\alpha_m \alpha'_{j_m}} + O(d_N^{-m-\epsilon})
\end{align}
for some $\epsilon = \epsilon(Q) > 0$ guaranteed by Theorem~\ref{theo:asymptotic normality}.
\end{lem}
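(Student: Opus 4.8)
The plan is to transfer the Haar-measure Weingarten calculus \eqref{eqn:old Weingarten} to the Wigner-induced measure $\mu_N$ using the asymptotic normality of Theorem~\ref{theo:asymptotic normality}, at the cost of weakening the error from $O(d_N^{-m-1})$ to $O(d_N^{-m-\epsilon})$. Write the left-hand side of \eqref{eqn:Weingarten} as $Q\big((\u{k_j}{\alpha_j},\u{k'_j}{\alpha'_j})_{j=1}^m\big)$ for the monomial $Q((z_j,w_j)_{j=1}^m)=z_1\cdots z_m\overline{w}_1\cdots\overline{w}_m$ introduced just above. The structural feature that makes Theorem~\ref{theo:asymptotic normality} applicable is that this monomial is \emph{phase balanced}: it contains exactly $m$ unconjugated and $m$ conjugated eigenvector entries, hence is invariant under $\u{k}{\alpha}\mapsto e^{i\theta}\u{k}{\alpha}$, $\overline{\u{k}{\alpha}}\mapsto e^{-i\theta}\overline{\u{k}{\alpha}}$ applied to each eigenvector index separately. (This is precisely the phase invariance needed for the expectation to be well defined on the quotient $\U(d_N)/\U(1)^{d_N}$ on which $\mu_N$ lives; if the multiset $\{k_1,\dots,k_m\}$ differs from $\{k'_1,\dots,k'_m\}$, both sides of \eqref{eqn:Weingarten} vanish.) Consequently the uniform phase $\omega$ appearing in Theorem~\ref{theo:asymptotic normality} drops out once we specialize to this $Q$, and since $Q$ is homogeneous with $Q(\sqrt{2N}\,\cdot)=(2N)^m Q(\cdot)$, dividing the conclusion of that theorem by $(2N)^m=d_N^m\big(1+O(d_N^{-1})\big)$ gives
\begin{equation*}
\E_{\mu_N}\!\left(\u{k_1}{\alpha_1}\cdots\overline{\u{k'_m}{\alpha'_m}}\right)
=\frac{1}{d_N^m}\,\E\, Q\big((\ncal^{(1)}_j+i\ncal^{(2)}_j,\ \ncal^{(1)}_j-i\ncal^{(2)}_j)_{j=1}^m\big)+O(d_N^{-m-\epsilon}),
\end{equation*}
where $\epsilon=\epsilon(Q)>0$ is the constant furnished by Theorem~\ref{theo:asymptotic normality} and $\E Q(\cdots)=O(1)$ is a fixed Gaussian moment.

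Next I would invoke the two purely deterministic identities recorded before the lemma. By \eqref{eqn:comparison with Gaussian}, the rescaled Gaussian moment $\frac{1}{d_N^m}\E Q(\cdots)$ agrees with the Haar integral $I_N(m)$ up to $O(d_N^{-m-1})$; and by \eqref{eqn:old Weingarten}, $I_N(m)=d_N^{-m}\sum\delta_{k_1k'_{j_1}}\delta_{\alpha_1\alpha'_{j_1}}\cdots\delta_{k_mk'_{j_m}}\delta_{\alpha_m\alpha'_{j_m}}+O(d_N^{-m-1})$, the sum running over all permutations $j_1,\dots,j_m$ of $1,\dots,m$. Substituting these into the display above and absorbing the various $O(d_N^{-m-1})$ remainders into the error term — replacing $\epsilon$ by $\min\{\epsilon,1\}$ if necessary — yields \eqref{eqn:Weingarten}. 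So the proof is, once the objects are in place, a three-line concatenation.

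The only genuine checks are of a bookkeeping nature, and this is where I expect the (mild) difficulty to lie. First, one must confirm that the variance normalization of the limiting complex Gaussians in Theorem~\ref{theo:asymptotic normality} is pinned down so that \eqref{eqn:comparison with Gaussian} holds exactly as stated, with prefactor $d_N^{-m}$ and no spurious power of $2$: this is the elementary Wick-formula evaluation behind \eqref{eqn:comparison with Gaussian}, combined with the standard fact that a Haar-uniform unit vector in $\C^{d_N}$ is, to leading order in $1/d_N$, a normalized complex Gaussian vector, so that both sides collapse to the same sum of products of Kronecker deltas. Second, Theorem~\ref{theo:asymptotic normality} concerns the moments of a \emph{single} eigenvector; in the applications below, \eqref{eqn:Weingarten} is always used with $k_1=\dots=k_m=k'_1=\dots=k'_m=k$, so every $\delta_{k_ik'_{j_i}}$ equals $1$ and one is squarely in the regime covered by Corollary~1.3 of \cite{BY}. (For tuples mixing several distinct eigenvectors the same argument applies verbatim once one feeds in the joint asymptotic normality of finitely many eigenvectors in place of Theorem~\ref{theo:asymptotic normality}, but this is not needed here.) The main obstacle is thus not analytic: it is simply to verify that the $\omega$-dependence of Theorem~\ref{theo:asymptotic normality} is vacuous on phase-balanced monomials and that the normalizations in \eqref{eqn:comparison with Gaussian} and \eqref{eqn:old Weingarten} are mutually consistent, after which the three estimates chain together immediately.
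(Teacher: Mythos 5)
Your argument is the same as the paper's: chain Theorem~\ref{theo:asymptotic normality} (after removing the $\sqrt{2N}$ scaling, with $(2N)^m=d_N^m(1+O(d_N^{-1}))$) with the Gaussian--Haar comparison \eqref{eqn:comparison with Gaussian} and the Weingarten asymptotics \eqref{eqn:old Weingarten}, absorbing the $O(d_N^{-m-1})$ remainders into $O(d_N^{-m-\epsilon})$; your observations about phase balance and the vanishing of both sides when the multisets of eigenvector indices differ are correct and consistent with the paper's (terser) ``putting together'' proof. The one inaccuracy is your bookkeeping claim that the lemma is only ever invoked with $k_1=\dotsb=k_m=k_1'=\dotsb=k_m'$: in the second-moment computations the paper applies \eqref{eqn:Weingarten} to products involving two eigenvector indices $k,j\in\T$ with $k\neq j$ allowed (see \eqref{eqn:T1} with its $\delta_{kj}$ term, and likewise $\E T_2$, $\E T_4$ and the estimate of $\E\W^2$ in Lemma~\ref{lem:expected value and second moment}), so the multi-eigenvector case of the lemma is genuinely needed and cannot be set aside. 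This does not change the structure of your proof --- as you note, one simply feeds in the joint asymptotic normality of finitely many eigenvectors from \cite{BY} in place of the single-eigenvector statement quoted as Theorem~\ref{theo:asymptotic normality} --- but that input is mandatory rather than optional, a point the paper itself leaves implicit.
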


Returning to the quantity $\E Z_N = \E S_1 + \E S_2$, we find that \eqref{eqn:Weingarten} implies
\begin{equation*}
\E \left(\lvert \u{k}{\alpha} \rvert^2 \lvert \u{k}{\beta} \rvert^2\right) = d_N^{-2} (1 + \delta_{\alpha\beta}) + O(d_N^{-2-\epsilon_1}) \quad \text{for $k \in \T$,}
\end{equation*}
which gives

\begin{align}\label{eqn:S1} \notag
\E S_1 &= \frac{1}{d_N} \sum_{k \in \T} \sum_{\alpha,\beta} \B{\alpha}{\alpha} \B{\beta}{\beta}\E \left(\lvert \u{k}{\alpha} \rvert^2 \lvert \u{k}{\beta} \rvert^2\right)\\ \notag
&= \sum_{\alpha,\beta} \B{\alpha}{\alpha} \B{\beta}{\beta} \left(\frac{1}{d_N^2}(1 + \delta_{\alpha\beta}) + O(d_N^{-2-\epsilon_1})\right)\\ 
&= \left(\frac{1}{d_N}\sum_{\alpha} \B{\alpha}{\alpha}\right)^2 + \frac{1}{d_N^2} \sum_\alpha \B{\alpha}{\alpha}^2 + O(d_N^{-\epsilon_1}).
\end{align}
The first sum in \eqref{eqn:S1} can be rewritten using semi-classical analysis. Let $\Pi_N \colon L^2(S^2) \rightarrow \hcal_N$ denote the spectral projection onto the eigenspace of degree $N$ spherical harmonics. Let $A \in \Psi^0(S^2)$ be any pseudo-differential operator of degree zero (not necessarily rotationally invariant), then Weyl's law states that

\begin{equation}\label{eqn:weyl}
\frac{1}{d_N}\sum_{\alpha} \A{\alpha}{\alpha} = \frac{1}{d_N}\operatorname{tr}(\Pi_N A \Pi_N) = \omega(A) + O(d_N^{-1}).
\end{equation}
%
%
%
For the second sum in \eqref{eqn:S1}, it suffices to note that the squares $\A{\alpha}{\alpha}^2$ of the matrix elements are uniformly bounded in $N$ because the pseudo-differential operator $A \in \Psi^0(S^2)$ (again, not necessarily rotationally invariant) is a bounded operator from $L^2(S^2)$ to itself. Since we are summing over $-N \le \alpha \le N$ (i.e., summing $d_N$ number of terms) and dividing by $d_N^2$, the second sum has only a lower order contribution:

\begin{equation}\label{eqn:square}
\frac{1}{d_N^2} \sum_\alpha \B{\alpha}{\alpha}^2 = O(d_N^{-1}).
\end{equation}
Combining \eqref{eqn:S1}, \eqref{eqn:weyl}, and \eqref{eqn:square} yields
\begin{equation*}
\E S_1 = \left(\omega(B) + O(d_N^{-1})\right)^2 + O(d_N^{-1}) + O(d_N^{-\epsilon_1}) = \omega(B)^2 + O(d_N^{-\epsilon_1}).
\end{equation*}

The asymptotics for $\E S_2$ is similarly computed. By \eqref{eqn:Weingarten}, we have

\begin{equation*}
\E \lvert \u{k}{\alpha} \rvert^2 = d_N^{-1} + O(d_N^{-1-\epsilon_2}) \quad \text{for $k \in \T$,}
\end{equation*}
whence
\begin{align*}
\E S_2 &= -\frac{2\omega(B)}{d_N}  \sum_{k \in \T} \sum_\alpha\B{\alpha}{\alpha} \E \lvert \u{k}{\alpha} \rvert^2+ \frac{1}{d_N} \sum_{k \in \T} \omega(B)^2\\
&= -2\omega(B)\sum_\alpha \B{\alpha}{\alpha}\left(\frac{1}{d_N} + O(d_N^{-1- \epsilon_2})\right) +  \omega(B)^2\\
&= -2\omega(B)^2 + \omega(B)^2 + O(d_N^{-\epsilon_2}),
\end{align*}
where the last equality follows from Weyl's law \eqref{eqn:weyl}. Adding together the expressions for $\E S_1$ and $\E S_2$ shows that $\E Z_N = O(d_N^{-\min\{\epsilon_1,\epsilon_2\}}) = O(d_N^{-\epsilon})$ as the factors of $\omega(B)^2$ cancel exactly. This proves the first part of Proposition~\ref{prop:rot inv}.

The computations for the second moment $\E Z_N^2$ is more tedious,  but no new techniques are required. Write a second copy of the random variable $Z_N$ with the indices $j, \eta, \xi$ in place of $k, \alpha,\beta$, then direct computation shows

\begin{equation*}
\E Z_N^2 = T_1 + T_2 + \dotsb + T_5,
\end{equation*}
where

\begin{align*}
T_1 &= \frac{1}{d_N^2} \sum_{k,j \in \T} \sum_{\alpha,\beta,\eta,\xi} \B{\alpha}{\alpha} \B{\beta}{\beta} \lvert \u{k}{\alpha} \rvert^2 \lvert \u{k}{\beta} \rvert^2\\
&\qquad \qquad \qquad \qquad \qquad \qquad \times \B{\eta}{\eta} \B{\xi}{\xi} \lvert \u{j}{\eta} \rvert^2 \lvert \u{j}{\xi} \rvert^2,\\
T_2 &= 	-\frac{4\omega(B)}{d_N^2} \sum_{k,j \in \T} \sum_{\alpha,\beta,\eta} \B{\alpha}{\alpha} \B{\beta}{\beta}\lvert \u{k}{\alpha} \rvert^2 \lvert \u{k}{\beta}\rvert^2\\
& \qquad \qquad \qquad \qquad \qquad \qquad\times \B{\eta}{\eta}  \lvert \u{j}{\eta} \rvert^2,\\
T_3 &= \frac{2 \omega(B)^2}{d_N^2} \sum_{k,j \in \T} \sum_{\alpha,\beta}  \B{\alpha}{\alpha}\B{\beta}{\beta} \lvert \u{k}{\alpha} \rvert^2 \lvert \u{k}{\beta} \rvert^2,\\
T_4 &= \frac{4\omega(B)^2}{d_N^2} \sum_{k,j \in \T}\sum_{\alpha,\eta} \B{\alpha}{\alpha} \lvert \u{k}{\alpha} \rvert^2 \B{\eta}{\eta} \lvert \u{j}{\eta} \rvert^2,\\
T_5 &= -\frac{4\omega(B)^3}{d_N^2} \sum_{k,j \in \T} \sum_\alpha \B{\alpha}{\alpha} \lvert \u{k}{\alpha} \rvert^2 + \frac{1}{d_N^2} \sum_{k,j \in \T} \omega(B)^4.
\end{align*}

We work out the asymptotics for $\E T_1$ in detail. Appealing once again to \eqref{eqn:Weingarten}, we have

\begin{equation}\label{eqn:T1}
\E \big(\lvert \u{k}{\alpha} \rvert^2 \lvert \u{k}{\beta} \rvert^2 \lvert \u{j}{\eta} \rvert^2 \lvert \u{j}{\xi} \rvert^2\big) = d_N^{-4} \big(C_1 +  \delta_{kj} C_2\big) + O(d_N^{-4-\epsilon_1'}),
\end{equation}
where

\begin{align*}
C_1 = C_1(\alpha,\beta,\eta,\xi) &= (1 + \delta_{\alpha\beta})(1 + \delta_{\eta\xi}),\\
C_2 = C_2(\alpha,\beta,\eta,\xi) &= \delta_{\alpha\eta}(1 + \delta_{\beta\xi} + 2\delta_{\eta\xi}) + \delta_{\alpha\xi}(1 + \delta_{\beta\eta} + 2\delta_{\beta\xi})\\
&\qquad\qquad +\delta_{\beta\eta}(1 + 2\delta_{\alpha\beta}) + \delta_{\beta\xi}(1 + 2\delta_{\eta\xi}) + 6\delta_{\alpha\beta}\delta_{\beta\xi}\delta_{\eta\xi}.
\end{align*}
These imply

\begin{align}\label{eqn:T1 2} \notag
\E T_1 &= \frac{1}{d_N^4}\sum_{\alpha,\beta,\eta,\xi} C_1(\alpha,\beta,\eta,\xi) \B{\alpha}{\alpha} \B{\beta}{\beta}\B{\eta}{\eta}\B{\xi}{\xi}\\
 & \quad +\frac{1}{d_N^5}\sum_{\alpha,\beta,\eta,\xi} C_2(\alpha,\beta,\eta,\xi) \B{\alpha}{\alpha} \B{\beta}{\beta}\B{\eta}{\eta}\B{\xi}{\xi}+ O(d_N^{-\epsilon_1'}).
\end{align}
Notice that the leading orders of $C_1$ and $C_2$ are different because there is a factor of $\delta_{kj}$ in front of $C_2$ but not $C_1$ in \eqref{eqn:T1}.

Consider the first line of the expression \eqref{eqn:T1 2} (i.e., the part that involves only $C_1$). Recall that $C_1 = (1 + \delta_{\alpha\beta})(1 + \delta_{\eta\xi}) = 1 + \delta_{\alpha\beta} + \delta_{\eta\xi} + \delta_{\alpha\beta}\delta_{\eta\xi}$ contains four terms. We claim that only the constant term has a top order contribution when computing the asymptotics of $\E T_1$; the other three terms containing Kronecker delta functions all have lower order contributions. Indeed, notice that

\begin{equation*}
\frac{1}{d_N^4}\sum_{\alpha,\beta,\eta,\xi}\delta_{\alpha\beta} \B{\alpha}{\alpha} \B{\beta}{\beta}\B{\eta}{\eta}\B{\xi}{\xi}
\end{equation*}
is equal to
\begin{equation*}
\frac{1}{d_N^4} \sum_{\alpha,\eta,\xi} \B{\alpha}{\alpha}^2\B{\eta}{\eta}\B{\xi}{\xi} = O(d_N^{-1}),
\end{equation*}
which is a lower order term because we are summing $d_N^3$ number of uniformly bounded products of matrix elements but dividing by $d_N^4$.

We now turn our attention to the second line of the expression \eqref{eqn:T1 2} (i.e., the part that involves only $C_2$). Notice that each term of $C_2$ contains at least one Kronecker delta function on the indices $\alpha,\beta,\eta,\xi$. At the same time, we are dividing the sum by $d_N^5$. Therefore, the entire second line is of order at most $O(d_N^{-2})$. These observations imply that the expected value of $T_1$ has the simple asymptotics

\begin{align*}
\E T_1 &= \frac{1}{d_N^4} \sum_{\alpha,\beta,\eta,\xi} \B{\alpha}{\alpha} \B{\beta}{\beta}\B{\eta}{\eta}\B{\xi}{\xi} + O(d_N^{-\epsilon_1'})\\\
& = \omega(B)^4 + O(d_N^{-\epsilon_1'}).
\end{align*}

Similar arguments show that

\begin{align*}
\E T_2 &= - \frac{4\omega(B)}{d_N^2} \sum_{k,j \in \T} \sum_{\alpha,\beta,\eta} \B{\alpha}{\alpha}\B{\beta}{\beta}\B{\eta}{\eta}\\
&\qquad \qquad \qquad \qquad \qquad \times \frac{1}{d_N^3}\big(1 + \delta_{\alpha\beta} + \delta_{kj}(\delta_{\alpha\eta} + \delta_{\beta\eta} + 2\delta_{\alpha\beta}\delta_{\beta\eta})\big) + O(d_N^{-\epsilon_2'})\\
&= -4\omega(B)^4 + O(d_N^{-\epsilon_2'}),\\
\E T_3 & = \frac{2\omega(B)^2}{d_N^2} \sum_{k,j \in \T}\sum_{\alpha,\beta}\B{\alpha}{\alpha}\B{\beta}{\beta} \frac{1}{d_N^2}(1 + \delta_{\alpha\beta}) + O(d_N^{-\epsilon_3'})\\
&= 2\omega(B)^4+ O(d_N^{-\epsilon_3'}),\\
\E T_4 &= \frac{4 \omega(B)^2}{d_N^2} \sum_{k,j \in \T}\sum_{\alpha,\eta}\B{\alpha}{\alpha}\B{\eta}{\eta} \frac{1}{d_N^2}(1 + \delta_{kj}\delta_{\alpha\eta})+ O(d_N^{-\epsilon_3'})\\
&= 4\omega(B)^4 +  O(d_N^{-\epsilon_3'}),\\
\E T_5 &= -\frac{4\omega(B)^3}{d_N^2} \sum_{k,j \in \T}\sum_\alpha \B{\alpha}{\alpha} \frac{1}{d_N} + \frac{1}{d_N^2} \sum_{k,j \in \T} \omega(B)^4 + O(d_N^{-\epsilon_4'})\\
&= -4\omega(B)^4 + \omega(B)^4+ O(d_N^{-\epsilon_4'}).
\end{align*}
As before, the factors of $\omega(B)^4$ cancel exactly, and we are left with

\begin{equation*}
\E Z_N^2 = \E T_1 + \dotsb + \E T_5 = O(d_N^{-\min\{\epsilon_1', \dotsc, \epsilon_4'\}}) = O(d_N^{-\epsilon'}).
\end{equation*}
This concludes the proof of Proposition~\ref{prop:rot inv}.
\end{proof}

\section{Proof of main theorems}\label{sec:general case}
We now return to Theorem~\ref{theo:main thm} and Theorem~\ref{theo:main thm 2}, which do not have invariance assumptions on the operator $A \in \Psi^0(S^2)$. This means that we can no longer assume \emph{a priori} (as we did in the previous section) that the matrix elements $\A{\alpha}{\beta}$ vanish for $\alpha \neq \beta$. We will show, however, that by taking a Fourier series representation of the operator $A$ and using orthogonality properties of the spherical harmonics, the general case reduces to the rotationally invariant case.

\subsection{Reduction to Fourier coefficients}
The goal of this section is to obtain a Fourier series representation for a general pseudo-differential operator. Let $r_\theta$ denote rotation about the $z$-axis by angle $\theta$, that is, if we write a point $x = (\cos\tau\sin\varphi,\sin\tau \sin\varphi,\cos\varphi) \in S^2$ in spherical coordinates, then

\begin{equation*}
r_\theta(x) := (\cos(\tau - \theta)\sin\varphi,\sin(\tau - \theta)\sin\varphi, \cos \varphi).
\end{equation*}
Given $A \in \Psi^0(S^2)$, form a new operator

\begin{equation*}
A_\theta := r_\theta^*  A  r_{-\theta}^* \in \Psi^0(S^2),
\end{equation*}
where $(r_\theta^* \phi)(x) := \phi(r_\theta(x))$ for any smooth function $\phi \in C^\infty(S^2)$. For $n \in \Z$, the Fourier coefficients $\hat{A}(n)$ of $A_\theta$ are defined by

\begin{equation} \label{eqn:Fourier coeff}
\hat{A}(n) := \dashint_{S^1} e^{-in \theta} A_\theta \,d\theta\in \Psi^0(S^2).
\end{equation}
These new operators are related to the original operator $A$ in the following way.

\begin{lem}\label{lem:convergence of Fourier series}
The partial sums $\sum_{\lvert n \rvert \le  N} \hat{A}(n)$ converge in the operator norm to $A$ as $N \rightarrow \infty$.
\end{lem}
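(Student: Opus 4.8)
The plan is to identify each Fourier coefficient $\hat A(n)$ with the ``$n$-th diagonal'' of $A$ in the spherical harmonics basis, and then to control $\|\hat A(n)\|$ by iterated commutators with the generator of $z$-rotations. First I would record how $r_\theta$ acts on the basis: since $Y_N^k$ is an eigenfunction of the $z$-component of angular momentum with eigenvalue $k$, it transforms under $z$-rotations by a phase, $r_\theta^* Y_N^k = e^{-ik\theta}Y_N^k$. Writing $L_z$ for the self-adjoint operator with $L_z Y_N^k = k Y_N^k$, this reads $r_\theta^* = e^{-i\theta L_z}$, so that $A_\theta = e^{-i\theta L_z} A\, e^{i\theta L_z}$ is the conjugation of $A$ by a one-parameter group of unitaries (each $r_\theta$ is an isometry of $S^2$, hence $r_\theta^*$ is unitary on $L^2(S^2)$). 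In particular $\|A_\theta\| = \|A\|$ for every $\theta$, and, interpreting the integral in \eqref{eqn:Fourier coeff} weakly as $\langle \hat A(n) f, g\rangle = \dashint_{S^1} e^{-in\theta}\langle A_\theta f, g\rangle\, d\theta$, I immediately obtain the a priori bound $\|\hat A(n)\| \le \|A\|$, valid for every $n \in \Z$ and every $A \in \Psi^0(S^2)$.

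Next I would compute matrix elements. For basis elements $Y_M^\ell, Y_{M'}^{\ell'}$, possibly of different degrees,
\[
\langle A_\theta Y_M^\ell, Y_{M'}^{\ell'}\rangle = \langle e^{-i\theta L_z} A\, e^{i\theta L_z} Y_M^\ell, Y_{M'}^{\ell'}\rangle = e^{i(\ell-\ell')\theta}\,\langle A Y_M^\ell, Y_{M'}^{\ell'}\rangle,
\]
so averaging against $e^{-in\theta}$ over $S^1$ gives $\langle \hat A(n) Y_M^\ell, Y_{M'}^{\ell'}\rangle = \langle A Y_M^\ell, Y_{M'}^{\ell'}\rangle$ when $\ell - \ell' = n$ and $0$ otherwise; that is, $\hat A(n)$ retains precisely the entries of $A$ on the $n$-th off-diagonal and discards the rest. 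Consequently, as soon as norm-convergence of the partial sums $\sum_{|n|\le N}\hat A(n)$ to some bounded operator $B$ is established, $B$ and $A$ have identical matrix coefficients in the orthonormal basis $\{Y_N^k\}$ (only the term $n = \ell-\ell'$ survives in the sum), which forces $B = A$. So the lemma reduces to the summability statement $\sum_{n\in\Z}\|\hat A(n)\| < \infty$.

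The crux — and the only place where the pseudodifferential (as opposed to merely bounded) nature of $A$ enters — is a polynomial-decay estimate for $\|\hat A(n)\|$. Since $L_z$ is a first-order differential operator (the smooth vector field generating the rotations $r_\theta$), the commutator calculus gives $\ad_{L_z}(A) = [L_z, A] \in \Psi^0(S^2)$, and inductively $\ad_{L_z}^j(A) \in \Psi^0(S^2)$ is $L^2$-bounded for every $j \ge 0$. Using $L_z = L_z^*$, $L_z Y_N^k = k Y_N^k$, and that $A Y_M^\ell$ is smooth, one computes $\langle \ad_{L_z}^j(A) Y_M^\ell, Y_{M'}^{\ell'}\rangle = (\ell' - \ell)^j\langle A Y_M^\ell, Y_{M'}^{\ell'}\rangle$; comparing with the previous paragraph, this says $\widehat{\ad_{L_z}^j(A)}(n) = (-n)^j\, \hat A(n)$ as bounded operators. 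Hence for $n \ne 0$, applying the a priori bound of the first paragraph to $\ad_{L_z}^j(A)$ in place of $A$,
\[
\|\hat A(n)\| = |n|^{-j}\,\|\widehat{\ad_{L_z}^j(A)}(n)\| \le |n|^{-j}\,\|\ad_{L_z}^j(A)\|.
\]
Taking $j = 2$ yields $\|\hat A(n)\| \le C|n|^{-2}$ with $C := \|\ad_{L_z}^2(A)\| < \infty$, so $\sum_{n\in\Z}\|\hat A(n)\| \le \|A\| + 2C\sum_{n\ge 1} n^{-2} < \infty$. Thus $\{\sum_{|n|\le N}\hat A(n)\}_N$ is Cauchy in operator norm, converges to a bounded operator $B$, and $B = A$ by the matrix-element argument of the second paragraph. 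I expect the one genuinely non-formal point to be the assertion $\ad_{L_z}^j(A) \in \Psi^0(S^2)$ — equivalently, that conjugating a zeroth-order operator by the rotation group is smooth at the level of symbols — and I would note that this smoothing is indispensable: one cannot shortcut the argument by soft Fourier inversion, since $\theta \mapsto A_\theta$ is strongly but not norm continuous.
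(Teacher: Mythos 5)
Your proof is correct, but it takes a genuinely different route from the paper. The paper proves the lemma ``softly'': it notes that $\frac{\partial}{\partial\theta}A_\theta = \frac{1}{i}\ad_{D_\theta}(A_\theta) \in \Psi^0(S^2)$, so $\theta \mapsto A_\theta$ is norm-differentiable, and then invokes convergence of the Dirichlet-kernel convolution $\dashint_{S^1} D_N(\theta) A_\theta\,d\theta \rightarrow A_0 = A$ for operator-valued $C^1$ functions; the quantitative decay $\|\hat A(n)\| = O(n^{-\ell})$ is proved separately (Lemma~\ref{lem:decay of Fourier coeff}) by integration by parts, for later use. You instead establish the decay estimate first --- your identity $\widehat{\ad_{L_z}^j(A)}(n) = (-n)^j\hat A(n)$, proved through matrix elements, is exactly the paper's integration-by-parts identity $(-n)^\ell\hat A(n) = \dashint e^{-in\theta}(\ad_{D_\theta})^\ell(A_\theta)\,d\theta$ in disguise, with $L_z = D_\theta$ --- and then conclude by absolute summability $\sum_n \|\hat A(n)\| < \infty$ plus identification of the limit via matrix coefficients in the spherical-harmonics basis (your off-diagonal description of $\hat A(n)$ is the paper's \eqref{eqn:matrix elements}). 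What your route buys is a stronger conclusion (absolute norm-summability, not just convergence of symmetric partial sums) and independence from any operator-valued Fourier convergence theorem, at the cost of folding Lemma~\ref{lem:decay of Fourier coeff} into the proof; the paper's route is shorter given that lemma and the Banach-space Dini/Dirichlet argument. One small correction to your closing aside: for $A \in \Psi^0(S^2)$ the map $\theta \mapsto A_\theta$ \emph{is} norm continuous, indeed norm-Lipschitz, since $\|A_\theta - A\| \le |\theta|\,\|[L_z,A]\|$ with $[L_z,A] \in \Psi^0(S^2)$ bounded --- this norm regularity is precisely what the paper's proof exploits; the failure of norm continuity you have in mind occurs only for general bounded operators, which is indeed why the pseudodifferential hypothesis cannot be dropped.
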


\begin{proof}[Proof of Lemma~\ref{lem:convergence of Fourier series}]
Let $D_\theta$ denote the generator of $z$-axis rotation so that $r_\theta^* = e^{-i\theta D_\theta}$. Then, since $D_\theta$ and $r_\theta^*$ commute, we have

\begin{equation*}
\frac{\partial}{\partial\theta} A_\theta = \left(\frac{\partial}{\partial\theta} r_\theta^*\right)Ar_{-\theta}^* + r_\theta^* A \left(\frac{\partial}{\partial\theta} r_{-\theta}^*\right) = \frac{1}{i}(D_\theta A_\theta - A_\theta D_\theta) = \frac{1}{i}\ad_{D_\theta}(A_\theta) \in \Psi^0(M).
\end{equation*}
This implies that the map $\theta \mapsto A_\theta$ is differentiable, and by elementary properties of convolution with the Dirichlet kernel $D_N(\theta) = \sum_{n = -N}^N e^{in\theta}$ we get uniform convergence

\begin{equation*}
\sum_{n = -N}^N \hat{A}(n) = \sum_{n = -N}^N \dashint_{S^1} e^{-in\theta}A_\theta\,d\theta = \dashint_{S^1}D_N(\theta)A_\theta\,d\theta \rightarrow A_0 = A. \qedhere
\end{equation*}
\end{proof}

\begin{lem}\label{lem:decay of Fourier coeff}
For $n \neq 0$, we have $\| \hat{A}(n) \| = O(n^{-\ell})$ for every $\ell \ge 1$. 
\end{lem}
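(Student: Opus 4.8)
The plan is to exploit the fact that $\hat A(n)$ is, by its very definition \eqref{eqn:Fourier coeff}, the $n$-th Fourier coefficient of the smooth $\Psi^0(S^2)$-valued function $\theta \mapsto A_\theta$, and that Fourier coefficients of smooth functions decay faster than any polynomial. Concretely, I would first record that $\theta \mapsto A_\theta$ is not merely differentiable but $C^\infty$ as a map into $\Psi^0(S^2)$ (equipped with the operator norm on $L^2(S^2)$, though one could also work in a stronger $\Psi^0$ topology). This follows by iterating the computation already carried out in the proof of Lemma~\ref{lem:convergence of Fourier series}: there it is shown that $\frac{\partial}{\partial\theta} A_\theta = \frac{1}{i}\ad_{D_\theta}(A_\theta)$, and since $\ad_{D_\theta}$ maps $\Psi^0(S^2)$ to $\Psi^0(S^2)$ (the commutator of a zeroth-order operator with the first-order generator of a one-parameter group of isometries is again zeroth-order, as $r_\theta$ is an isometry so conjugation by $r_\theta^*$ preserves $\Psi^0$), the same identity applied repeatedly gives $\frac{\partial^\ell}{\partial\theta^\ell} A_\theta = \frac{1}{i^\ell}\ad_{D_\theta}^\ell(A_\theta) \in \Psi^0(S^2)$ for every $\ell \ge 1$, with uniformly bounded operator norm $\sup_\theta \| \ad_{D_\theta}^\ell(A_\theta)\| =: C_\ell < \infty$.

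Next I would integrate by parts $\ell$ times in the defining integral \eqref{eqn:Fourier coeff}. For $n \neq 0$,
\begin{equation*}
\hat A(n) = \dashint_{S^1} e^{-in\theta} A_\theta\,d\theta = \frac{1}{(in)^\ell}\dashint_{S^1} e^{-in\theta}\, \frac{\partial^\ell}{\partial\theta^\ell} A_\theta\,d\theta,
\end{equation*}
where the boundary terms vanish by periodicity in $\theta$. Taking operator norms and pulling the norm inside the integral (valid since the integrand is a continuous $\Psi^0(S^2)$-valued function on the compact circle) yields
\begin{equation*}
\| \hat A(n) \| \le \frac{1}{\lvert n\rvert^\ell}\, \sup_\theta \left\| \frac{\partial^\ell}{\partial\theta^\ell} A_\theta \right\| = \frac{C_\ell}{\lvert n\rvert^\ell} = O(n^{-\ell}),
\end{equation*}
which is exactly the claimed bound, with the implied constant $C_\ell$ depending on $\ell$ but not on $n$.

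The only genuine point requiring care — and the place I would be most careful in writing this up — is the justification that $\ad_{D_\theta}$ preserves $\Psi^0(S^2)$ with control on the seminorms, so that the iterated derivatives really do stay zeroth-order with uniformly bounded norms. This is where the structure of $r_\theta$ as a smooth one-parameter group of isometries of $(S^2,g)$ is used: conjugation $B \mapsto r_\theta^* B r_{-\theta}^*$ is an automorphism of $\Psi^0(S^2)$ whose infinitesimal generator $\frac{1}{i}\ad_{D_\theta}$ therefore maps $\Psi^0$ into itself, and a compactness-in-$\theta$ argument gives the uniform bound $C_\ell$. Everything else is the standard "smooth functions have rapidly decaying Fourier coefficients" argument, here transcribed to a Banach-space-valued (indeed $\Psi^0$-valued) setting, and I would present it briskly. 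If one prefers to avoid even mentioning the Banach-space-valued Fourier theory, an equivalent route is to note that for each fixed test function $\phi$, the scalar function $\theta \mapsto \langle A_\theta \phi, \psi\rangle$ is smooth and apply scalar integration by parts, then take a supremum over $\phi,\psi$ in the unit ball; this is morally the same computation.
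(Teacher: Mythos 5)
Your argument is correct and is essentially the paper's own proof: both integrate by parts $\ell$ times in the definition \eqref{eqn:Fourier coeff}, using the identity $\frac{\partial}{\partial\theta}A_\theta = \frac{1}{i}\ad_{D_\theta}(A_\theta)$ and the fact that $(\ad_{D_\theta})^\ell(A_\theta)$ remains a zeroth-order (hence $L^2$-bounded) operator, so that $n^\ell\|\hat{A}(n)\| = O(1)$. The extra care you take in justifying that $\ad_{D_\theta}$ preserves $\Psi^0(S^2)$ with uniform-in-$\theta$ norm bounds is a welcome elaboration of what the paper states without comment, but it does not change the route.
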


\begin{proof}[Proof of Lemma~\ref{lem:decay of Fourier coeff}]
Integrating \eqref{eqn:Fourier coeff} by parts gives

\begin{equation*}
n \hat{A}(n) = \frac{i}{2\pi}e^{-in\theta} A_\theta\bigg|_{\theta = 0}^{2\pi} - \dashint_{S^1} e^{-in\theta} \ad_{D_\theta}(A_\theta)\,d\theta = -\dashint_{S^1} e^{-in\theta} \ad_{D_\theta}(A_\theta)\,d\theta.
\end{equation*}
It follows that integrating by parts $\ell$ times yields

\begin{equation*}
(-n)^\ell \hat{A}(n) = \dashint_{S^1} e^{-in\theta} (\ad_{D_\theta})^\ell(A_\theta)\,d\theta.
\end{equation*}
Since $(\ad_{D_\theta})^\ell(A_\theta) \in \Psi^0(S^2)$ for all $\ell \ge 1$, we conclude that $n^\ell \|\hat{A}(n)\| = O(1)$.
\end{proof}
These lemmas allow us to replace $A$ with finite sums of the form $\sum_{\lvert n \rvert \le N} \hat{A}(n)$. We record several facts about the operators $\hat{A}(n)$. First, conjugating by rotation $A \mapsto  r_\theta^* Ar_{-\theta}^* = A_\theta$ changes the principal symbol of $A$ by the canonical transformation on the cosphere bundle:

\begin{equation*}
\sigma_{A_\theta}(x,\xi) = \sigma_A(r_\theta (x),(Dr_{-\theta}(x))^{-1}\xi).
\end{equation*}
It follows from definition \eqref{eqn:Fourier coeff} of $\hat{A}(n)$ that

\begin{equation} \label{eqn:liouville integral}
\omega(\hat{A}(n)) := \int_{S^*M} \dashint_{S^1}e^{-in\theta} \sigma_A(r_\theta (x),(Dr_{-\theta}(x))^{-1}\xi)\,d\theta d\mu_L = 
\begin{cases}
\omega(A) & \text{if $n = 0$,}\\
0 &\text{if $n \neq 0$,}
\end{cases}
\end{equation}
where the latter equality follows from interchanging the order of integration and using the fact that the Liouville measure $\mu_L$ is invariant under canonical transformations.

Second, from the definition of spherical harmonics, for each fixed $n$ the matrix elements of $\hat{A}(n)$ are related to those of $A$ by the identity

\begin{equation}\label{eqn:matrix elements}
\langle \hat{A}(n) Y_N^\alpha, Y_N^\beta \rangle=
\begin{cases}
\langle A Y_N^\alpha, Y_N^{\alpha -n}\rangle &\text{if $\alpha = \beta +n$}\\
0 & \text{if $\alpha \neq \beta + n$}
\end{cases}
\quad\text{simultaneously for all $N$.}
\end{equation}
In other words, the infinite block-diagonal matrix with blocks $(\langle \hat{A}(n) Y_N^\alpha, Y_N^\beta \rangle)_{\alpha,\beta = -N}^N$ is obtained from the infinite block diagonal matrix with blocks $(\A{\alpha}{\beta})_{\alpha,\beta = -N}^N$ by replacing all the entries except those on the $n$th diagonal above (or below, depending on the sign of $n$) the main diagonal by zeros.

\subsection{Computations with Fourier coefficients}\label{sec:computations general}

Having defined Fourier coefficients $\hat{A}(n)$ and discussed their properties, we proceed to compute the expected value and second moment of the associated random variables

\begin{align*}
\W  &:= \frac{1}{d_N}\sum_{k \in \T} \lvert \langle \hat{A}(n) \psi_{N,k}, \psi_{N,k} \rangle - \omega(\hat{A}(n)) \rvert^2\\
&=
\begin{dcases}
\frac{1}{d_N} \sum_{k \in \T} \left\lvert \sum_{\alpha = -N +n}^N\A{\alpha}{\alpha}\u{k}{\alpha}\overline{\u{k}{\alpha}} - \omega(A) \right\rvert^2 &\text{if $n = 0$,}\\ 
\frac{1}{d_N} \sum_{k \in \T} \left\lvert \sum_{\alpha = -N + n}^N \A{\alpha}{\alpha-n} \u{k}{\alpha}\overline{\u{k}{\alpha -n}} \right\rvert^2 & \text{if $n \neq 0$,}
\end{dcases}
\end{align*}
where the second equality is obtained by first writing $\psi_{N,k}$ in terms of $Y_N^\alpha$ using \eqref{eqn:random onb}, and then applying \eqref{eqn:liouville integral} and \eqref{eqn:matrix elements}. We make the crucial observation that the discussion following \eqref{eqn:matrix elements} implies  the  identity

\begin{equation}\label{eqn:rv sum}
\X = \sum_{n \in \Z} \W \quad \text{for each $N = 0, 1, 2, \dotsc$.}
\end{equation}
The asymptotics for $\E\W$ and $\E\W^2$ can be easily computed.

\begin{lem}\label{lem:expected value and second moment}
For each fixed $n \in \Z$, we have $\E\W = O(d_N^{-\epsilon})$ and $\E\W^2 = O(d_N^{-\epsilon'})$ for some $\epsilon, \epsilon' > 0$ guaranteed by Theorem~\ref{theo:asymptotic normality}.
\end{lem}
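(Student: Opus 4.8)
The plan is to mimic the structure of the proof of Proposition~\ref{prop:rot inv} almost verbatim, splitting into the two cases $n = 0$ and $n \neq 0$. For $n = 0$, the random variable $\W$ is literally of the same shape as $Z_N$ in \eqref{eqn:inv rv} (with $B$ replaced by the diagonal part of $A$, i.e.\ the matrix $(\langle A Y_N^\alpha, Y_N^\alpha\rangle)_\alpha$), and the only facts about $B$ used in that proof were that the matrix elements $\langle B Y_N^\alpha, Y_N^\alpha\rangle$ are uniformly bounded in $N$ (true here since $A \in \Psi^0(S^2)$ is $L^2$-bounded) and Weyl's law \eqref{eqn:weyl} (which holds for arbitrary $A \in \Psi^0(S^2)$, not just rotationally invariant ones). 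So for $n = 0$ I would simply invoke the computation already carried out, noting $\omega(\hat{A}(0)) = \omega(A)$ from \eqref{eqn:liouville integral}, to get $\E W_{0,N} = O(d_N^{-\epsilon})$ and $\E W_{0,N}^2 = O(d_N^{-\epsilon'})$.

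For $n \neq 0$ the structure is even simpler because, by \eqref{eqn:liouville integral}, $\omega(\hat{A}(n)) = 0$, so there is no subtraction and no cancellation-of-$\omega$ bookkeeping; one only needs to show the raw moment is small. Expanding $\lvert \sum_\alpha \langle A Y_N^\alpha, Y_N^{\alpha-n}\rangle u_{N,k}(\alpha)\overline{u_{N,k}(\alpha-n)}\rvert^2$ produces a double sum over $\alpha, \beta$ of products $\langle A Y_N^\alpha, Y_N^{\alpha-n}\rangle \overline{\langle A Y_N^\beta, Y_N^{\beta-n}\rangle}$ times $u_{N,k}(\alpha)\overline{u_{N,k}(\alpha-n)}\overline{u_{N,k}(\beta)}u_{N,k}(\beta-n)$. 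Here I apply the Wigner--Weingarten formula \eqref{eqn:Weingarten} with $m = 2$: the $u$-variables carry upper indices $\alpha, \beta-n$ and lower (conjugated) indices $\alpha-n, \beta$, so a nonzero leading contribution requires the multiset $\{\alpha, \beta-n\}$ to match $\{\alpha-n, \beta\}$. Since $n \neq 0$, the pairing $\alpha \leftrightarrow \alpha-n$ is impossible, forcing $\alpha = \beta$ (and then $\beta - n = \alpha - n$ automatically), which collapses the double sum to a single sum over $\alpha$, giving a factor $d_N^{-2}\sum_\alpha \lvert\langle A Y_N^\alpha, Y_N^{\alpha-n}\rangle\rvert^2$; after dividing by the remaining $d_N$ and summing over $k \in \T$ (i.e.\ multiplying by $\le d_N$) this is $d_N^{-1}\sum_\alpha \lvert\langle A Y_N^\alpha, Y_N^{\alpha-n}\rangle\rvert^2 = O(d_N^{-1})$, using again that the matrix elements of $A$ are bounded and there are $d_N$ of them; the error term from \eqref{eqn:Weingarten} contributes $O(d_N^{-\epsilon})$. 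The same bookkeeping with $m = 4$ handles $\E W_{n,N}^2$: each surviving Weingarten pairing must respect the $n$-shift structure, every admissible pairing forces enough coincidences among $\alpha, \beta, \eta, \xi$ to lose at least one free summation index relative to the $d_N^{-4}$ (resp.\ $d_N^{-5}$ for the $\delta_{kj}$ terms) normalization, and one concludes $\E W_{n,N}^2 = O(d_N^{-\epsilon'})$.

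The main obstacle, such as it is, is purely combinatorial bookkeeping for the second-moment case: one must check that in every term of the Weingarten expansion for $m = 4$ the constraint that upper indices $\{\alpha, \beta-n, \eta, \xi-n\}$ match lower indices $\{\alpha-n, \beta, \eta-n, \xi\}$ (together with the $k$-index Kronecker deltas) forces at least one coincidence among $\{\alpha,\beta,\eta,\xi\}$, so that the number of free indices is strictly less than the power of $d_N$ in the denominator. This is the analogue of the observation in the rotationally invariant case that ``each term of $C_2$ contains at least one Kronecker delta,'' and it goes through for the same reason: a fixed nonzero shift $n$ can never be absorbed into a pairing of an index with itself. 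I would state this as the key point and leave the term-by-term enumeration to the reader, exactly as the analogous enumeration of $\E T_1, \dots, \E T_5$ is abbreviated in the proof of Proposition~\ref{prop:rot inv}. Finally, I note that the exponents $\epsilon, \epsilon'$ produced here are governed by the same polynomials $Q_1, Q_2$ (of degrees $4$ and $8$) appearing in the rotationally invariant argument, so they may be taken uniform in $n$ — a point that matters when \eqref{eqn:rv sum} is used to sum over $n$ in the proof of Theorem~\ref{theo:main thm 2}.
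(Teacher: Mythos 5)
Your proof is correct and takes essentially the same route as the paper: the $n=0$ case is the same reduction to Proposition~\ref{prop:rot inv} (since $\hat{A}(0)$ is rotationally invariant with diagonal matrix elements equal to those of $A$ and $\omega(\hat{A}(0))=\omega(A)$), and for $n\neq 0$ you apply the Wigner--Weingarten asymptotics \eqref{eqn:Weingarten} and note that the fixed shift $n$ kills the self-pairings and forces index coincidences in any surviving pairing, giving $\E\W = O(d_N^{-\epsilon})$ and $\E\W^2 = O(d_N^{-\epsilon'})$ --- indeed your explicit tracking of the surviving swap pairing $\alpha=\beta$ is, if anything, more careful than the paper's brief dismissal of these terms. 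One cosmetic slip: after summing over $k\in\T$ and dividing by $d_N$, the collapsed diagonal contribution is $d_N^{-2}\sum_\alpha \lvert\langle A Y_N^\alpha, Y_N^{\alpha-n}\rangle\rvert^2$ rather than $d_N^{-1}\sum_\alpha \lvert\langle A Y_N^\alpha, Y_N^{\alpha-n}\rangle\rvert^2$, which still gives the stated $O(d_N^{-1})$.
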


\begin{proof}[Proof of Lemma~\ref{lem:expected value and second moment}]
Thanks to \eqref{eqn:matrix elements}, we recognize that $\hat{A}(0)$ is a rotationally invariant operator of the kind considered in Section~\ref{sec:rot inv}. Thus, when $n = 0$ the statement of the lemma follows from Proposition~\ref{prop:rot inv}.

When $n \neq 0$, expanding the square yields

\begin{equation*}
\W = \frac{1}{d_N} \sum_{k \in \T} \sum_{\alpha,\beta}\A{\alpha}{\alpha-n} \A{\beta}{\beta-n} \u{k}{\alpha}\u{k}{\beta}\overline{\u{k}{\alpha-n}}\overline{\u{k}{\beta-n}}.
\end{equation*}
Appealing once again to the asymptotic formula \eqref{eqn:Weingarten}, we find

\begin{equation*}
\E \left(\u{k}{\alpha}\u{k}{\beta}\overline{\u{k}{\alpha-n}}\overline{\u{k}{\beta-n}}\right) = d_N^{-2} (\delta_{\alpha,\alpha-n} \delta_{\beta,\beta-n} + \delta_{\alpha,\beta-n}\delta_{\beta,\alpha-n}) + O(d_N^{-2-\epsilon}).
\end{equation*}
Since $n \neq 0$ by hypothesis, by what is now a standard argument we conclude that all the terms in the expression of $\E\W$ that contain Kronecker delta functions are of order at most $O(d_N^{-1})$, so $\E\W = O(d_N^{-\epsilon})$.

The second moment computation is equally straightforward. Indeed, we have
\begin{align*}
\W^2 &= \frac{1}{d_N^2} \sum_{k,j \in \T} \sum_{\alpha,\beta,\eta,\xi} \A{\alpha}{\alpha -n}\A{\beta}{\beta-n}\A{\eta}{\eta -n}\A{\xi}{\xi-n}\\
&\qquad \times \u{k}{\alpha}\u{k}{\beta}\overline{\u{k}{\alpha-n}}\overline{\u{k}{\beta-n}} \u{j}{\eta}\u{j}{\xi}\overline{\u{j}{\eta-n}}\overline{\u{j}{\xi-n}}.
\end{align*}
It is easy to verify using \eqref{eqn:Weingarten} that the expected value of the product of eigenvector components is asymptotically zero because every term in the asymptotic formula contains a factor of $\delta_{\alpha,\alpha-n}$ for $n = 1, \dotsc, 4$.
\end{proof}

\subsection{Approximation argument}\label{sec:final}

We finish the computations for $\E\X$ and $\E\X^2$ by an approximation argument. 

\begin{proof}[Proof of Theorem~\ref{theo:main thm 2}]
Fix some small constant $\omega > 0$, then by \eqref{eqn:rv sum} there exists $M > 0$ such that $\sum_{\lvert n \rvert > M} \W < \omega$. Using Lemma~\ref{lem:expected value and second moment} for the asymptotics of $\E \W$ yields
\begin{equation*}
\E \X \le \E \bigg(\sum_{\lvert n \rvert \le M} \W + \omega\bigg) = \sum_{\lvert n \rvert \le M} \E \W + \omega = O(d_N^{-\epsilon}) + \omega.
\end{equation*}

The asymptotics for the second moment is similarly computed using the elementary inequality $(a_1 + \dotsb + a_m)^2 \le m(a_1^2 + \dotsb + a_m^2)$ and Lemma~\ref{lem:expected value and second moment}:

\begin{align*}
\E \X^2 &\le \E\bigg(\sum_{\lvert n \rvert \le M} \W + \omega\bigg)^2 \\
&\le (2M+1)\sum_{\lvert n \rvert \le M} \E \W^2 + 2\omega\sum_{\lvert n \rvert \le M}\E\W + \omega^2\\
& = O(d_N^{-\epsilon'}) + O(d_N^{-\epsilon}) + \omega^2.
\end{align*}
Since $\omega$ is arbitrary, Theorem~\ref{theo:main thm 2} is proved with $\epsilon_0 = \epsilon$ and $\epsilon_0' = \min\{\epsilon, \epsilon'\}$.
\end{proof}

\begin{proof}[Proof of Theorem~\ref{theo:main thm}]
Let $\sigma_N^2 := \E\X^2 - (\E\X)^2$ be the variance of the random variable $\X$. Theorem~\ref{theo:main thm} shows that the sequence $\{\X\}$ satisfies Kolmogorov's convergence criterion, that is, $\sum_{N=1}^\infty \sigma_N^2/N^2 < \infty$. We may therefore invoke the Strong Law of Large Numbers to conclude that the partial sums $\frac{1}{M}\sum_{N=0}^M \X$ converge to its expected value almost surely. But $\E\X = O(d_N^{-\epsilon})$, which implies that the expected values of the partial sums converge to zero, finishing the proof of Theorem~\ref{theo:main thm}.
\end{proof}

%

\end{document}